\documentclass{article}
\usepackage[margin=1.5in]{geometry} 
\usepackage{graphicx} 

\pdfoutput = 1

\usepackage{amsmath,amsthm,amssymb,amsfonts, fancyhdr, comment, parskip, tikz, caption, subcaption, graphicx, xcolor}
\usepackage[colorlinks]{hyperref}
\usetikzlibrary{graphs}
\usetikzlibrary{shadings}
\usetikzlibrary{shapes.geometric}
\usetikzlibrary{arrows}
\usepackage{mathtools}
\usepackage{standalone}
\usepackage{upgreek}
\usepackage{bm}
\usepackage{tikz-cd}
\usepackage{enumerate}
\usepackage{enumitem}
\usepackage{soul}

\hypersetup{
    colorlinks=true,
    linkcolor=blue,
    citecolor=green,
    filecolor=magenta,
    urlcolor=blue,
    pdfborder={0 0 0}, 
}

\usepackage{mdframed}
\usepackage[capitalise]{cleveref}
\usepackage{float}

\captionsetup{width=0.9\textwidth}

\newtheorem{thm}{Theorem}

\newtheorem{lemma}[thm]{Lemma}

\newtheorem{claim}[thm]{Claim}

\theoremstyle{definition}

\newcommand\cG{{\mathcal G}}
\newcommand\cH{{\mathcal H}}

\newcommand\cM{{\mathcal M}}

\title{Generalized Ramsey Numbers in the Hypercube}
\author{Emily Heath \footnote {California State Polytechnic University Pomona, \texttt{eheath@cpp.edu}.}
\and Coy Schwieder \footnote {Iowa State University, \texttt{cschwi@iastate.edu}.}
\and Shira Zerbib \footnote {Iowa State University, \texttt{zerbib@iastate.edu}. Supported by NSF CAREER award no. 2336239  and Simons Foundation award no. MP-TSM-00002629.}}
\date{}
\begin{document}
\maketitle

\begin{abstract}


We study the generalized Ramsey numbers $f(Q_n, C_{k}, q)$, that is, the minimum number of colors needed to edge-color the hypercube $Q_n$ so that every copy of the cycle $C_{k}$ has at least $q$ colors. Our main result is that  for any integers $k,q$ satisfying  $k \geq 6$ and $3 \leq q \leq k/2+1$,  we have $f(Q_n, C_{k}, q)= o\left( n^{\frac{k/2-1}{k-q+1}} \right).$ We also prove a few other upper and lower bounds in the special cases $k=4$ and $k=6$. 
 This continues the line of research initiated by Faudree, Gy\'arf\'as, Lesniak, and Schelp  \cite{FGLS} and  Mubayi and Stading \cite{MS} who studied the case $k=q$, and by Conder \cite{conder} who considered the case $k=6$ and  $q=2$. 
\end{abstract}
\section{Introduction}

The \emph{generalized Ramsey number} $f(G, H, q)$, introduced by Erd\H{o}s and Shelah \cite{Erdos-Shelah1} in 1974, is the minimum number of colors needed to edge-color a graph $G$ so that every copy of a subgraph $H$ has at least $q$ colors. Finding $f(G, H, q)$ when $G=K_n$, $H=K_p$, and $q = 2$ is equivalent to determining the diagonal multicolor Ramsey numbers $R_k(p)$ for $k=f(G, H, q)$. 

The problem of bounding $f(K_n, K_p, q)$ was first systematically studied by Erd\H{o}s and Gy\'arf\'as \cite{er-gy}, who proved a general upper bound 
\[f(K_n, K_p, q) = O \left( n^{\frac{p-2}{\binom{p}{2} - q + 1}} \right).\]
Building on work of Bennett, Dudek, and English \cite{BDE},  Bennett, Delcourt, Li, and Postle \cite{BDLP} improved this result by a logarithmic factor except at the integer powers of $n$, where the original upper bound is tight.  More generally, in \cite{BDLP} it was shown that for every graph $G$ on $n$ vertices, subgraph $H$ of $G$, and integer $1\leq q\leq |E(H)|$ such that $|E(H)|-q+1$ does not divide $|V(H)|-2$, 
\begin{equation}\label{eq: general upper bound}
    f(G, H, q) = O \left( \left( \frac{n^{|V(H)| - 2}}{\log n} \right)^{\frac{1}{|E(H)| - q + 1}} \right).
\end{equation} 
The proof utilized the ``conflict-free hypergraph matching method", developed independently by Delcourt and Postle in \cite{DP} and Glock, Joos, Kim, K\"uhn, and Lichev in \cite{GJKKL}. This method  has since been used to study various applications, including  generalized Ramsey numbers,  odd Ramsey numbers, list-colorings, and designs; for example, see \cite{BBHZ, BCD, BHZ, CHHSZ, DP, GJKKL, GHPSZ, JMS, LM}. 

The upper bound in (\ref{eq: general upper bound}) is not known to be tight in general. Many researchers have studied the numbers $f(G, H, q)$ for various host graphs $G$ and subgraphs $H$ (see ~\cite{axenovich2000, AFM, BEHK, BCDP, BDE, 56,  CH1, CH2, CFLS, er-gy, FPS, Mubayi1, mubayi2004, PS, sarkozy2000edge, sarkozy2003application}). In this paper we contribute to this effort by studying the case where the  host graph $G$ is the hypercube and $H$ is a cycle $C_k$. 
Denote by $Q_n$ the 
 {\em $n$-hypercube}, that is, the graph on vertex set $V(Q_n) = \{0, 1\}^n$ (the set of all $\{0,1\}$-sequences of length $n$) where  $E(Q_n)$ consists of all edges $xy, ~x,y\in V(Q_n)$ such that $x$ and $y$ differ in exactly one position.  Note that $Q_n$ is bipartite, hence it has no odd cycles.

The numbers $f(Q_n, C_k, q)$ 
have been studied thus far mostly for the case  $q=k$, namely when all the $k$-cycles of $Q_n$ are required to be rainbow. 
In 1993, Faudree, Gy\'arf\'as, Lesniak, and Schelp  \cite{FGLS} proved that for $n = 4$ or $n \ge 6$, 
\begin{equation}\label{eq: FGLS}
      f(Q_n, C_4, 4) = n. 
\end{equation}
        Mubayi and Stading \cite{MS} later expanded the study of $f(Q_n,C_k,k)$ to all integers $k$ divisible by 4, and to $k=6$. 
\begin{thm}[Mubayi-Stading \cite{MS}]
    For any integer $k\ge 1$  such that $k\equiv 0 \bmod 4$, there exist constants $c_1, c_2$, depending only on $k$, such that
    $$c_1 n^{k/4} \le f(Q_n, C_{k}, k) \le c_2 n^{k/4}.
    $$
In addition, 
    $$
        3n - 2 \leq f(Q_n, C_6, 6) \le n^{1 + o(1)}.
    $$
\end{thm}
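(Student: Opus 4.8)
The plan is to work throughout with the ``conflict graph'' $\mathcal{K}$ on the edge set $E(Q_n)$, where two edges are adjacent if and only if some copy of $C_k$ contains both; then $f(Q_n,C_k,k)=\chi(\mathcal{K})$, so lower bounds come from cliques in $\mathcal{K}$ (or from bounding its independent sets, i.e.\ the color classes) and upper bounds come from explicit proper colorings of $\mathcal{K}$. The basic structural input is that every copy of $C_k$ in $Q_n$ uses $m:=k/2$ coordinate directions, generically each exactly twice, and that the two edges of a repeated direction sit at base vertices whose XOR is the sum of the unit vectors of the directions appearing an odd number of times along one of the two arcs between them. A first sanity check is that a uniformly random coloring makes a given $C_k$ non-rainbow with probability $O(1/t)$, and since $Q_n$ has $\Theta(n^{k/2}2^n)$ copies of $C_k$ and $\Theta(n 2^n)$ edges, alteration only yields $t=O(n^{k/2-1})$ colors; the sharp exponent $k/4$ in the upper bound must therefore exploit the algebraic structure of the cube rather than genericity.

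For the lower bounds I would argue locally in $\mathcal{K}$. The $n$ edges incident to a single vertex $v$ are pairwise contained in a common $C_k$ (complete the two chosen directions to a $k$-cycle through $v$), so they form a clique and $f\ge n$; this already gives the exponent $k/4=1$ when $k=4$. For $C_6$ I would show that the two stars at a pair of adjacent vertices $v,v+e_1$ still form a clique — any two of their edges, even two in the same direction, can be routed into a common $6$-cycle using a third, auxiliary direction — giving $2n-1$, and then squeeze out the remaining $n-1$ colors by exploiting that the two repeated-direction edges of a $6$-cycle can live at bases differing in \emph{two} coordinates, which forces additional distinct colors on a nearby star and yields $3n-2$. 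For general $k\equiv 0\bmod 4$ I would replace the single star by a $(k/2)$-dimensional local configuration, or bound the size of a color class (an independent set of $\mathcal{K}$) by analyzing how sparsely same-direction edges must be spread, to obtain $\Omega(n^{k/4})$.

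The heart of the theorem is the upper bound $f(Q_n,C_k,k)\le c_2 n^{k/4}$ for $k\equiv 0\bmod 4$, and here I would construct an explicit coloring rather than a random one. The idea is to color an edge in direction $i$ at base $x$ by a pair $(i,\,\Phi(x))$, where $\Phi$ is a carefully chosen form valued in a set of size $\Theta(n^{k/4-1})$, so that within every $C_k$ the (at most two) edges of each direction receive different second coordinates. By the structural description above this amounts to separating all the base-differences that can arise, which are XORs of up to $m-1$ coordinates; a scalar linear form would need a Sidon-type weight set and overshoot, so I would instead use that $m=k/2$ is \emph{even} to split the directions into two halves of size $k/4$ and build $\Phi$ as a vector-valued form of dimension $k/4$ (equivalently, a product or recursive construction), which is exactly what brings the count down to $n^{k/4}$. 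Verifying that no two edges of a common $C_k$ collide, across all the combinatorial types of $k$-cycles and their degenerate (fewer-direction) versions, is the step I expect to be the main obstacle.

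Finally, for $f(Q_n,C_6,6)\le n^{1+o(1)}$ the reduction above shows that a coloring by (direction, scalar) is rainbow precisely when the scalar part properly colors the ``distance-$2$'' graph on $\{0,1\}^n$; since that graph has cliques of size $n-1$, this route is stuck at $n^2$, so the subpolynomial saving must come from elsewhere. I would either apply the conflict-free hypergraph matching method to $\mathcal{K}$, tuned to the hypercube's codegrees, or design a coloring whose color classes avoid the additive coincidences produced by $6$-cycles using a Behrend/Sidon-type set, so that only an $n^{o(1)}$ factor beyond the trivial $n$ colors is needed. Controlling the lower-order $n^{o(1)}$ term while still killing every $6$-cycle is the delicate point of this last case.
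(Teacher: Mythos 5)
First, note that the paper does not prove this statement at all: it is quoted verbatim from Mubayi and Stading \cite{MS}, so there is no in-paper argument to compare yours against; your proposal has to stand on its own, and it does not. Your reformulation $f(Q_n,C_k,k)=\chi(\mathcal{K})$ for the conflict graph is correct, and the single-star clique giving $f\ge n$ is fine, but each of the bounds that carries real content is left as a plan rather than a proof. The most serious gap is the upper bound $O(n^{k/4})$, which is the heart of the theorem. You correctly diagnose that a color of the form $(\mathrm{direction},\Phi(x))$ forces $\Phi$ to separate base points at Hamming distance up to $k/2-1$ and that a scalar Sidon-type form overshoots, but the proposed fix --- ``split the directions into two halves of size $k/4$ and build $\Phi$ as a vector-valued form'' --- is not a definition of a coloring: a single edge does not know which $2k$-cycle it will eventually sit in, hence does not know which ``half'' its direction belongs to relative to the other directions of that cycle, so $\Phi$ is never actually constructed, and the collision check you yourself flag as ``the main obstacle'' is precisely the content of the theorem.

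The lower bounds have concrete problems as well. For $3n-2$, the natural reading of your sketch is a clique of size $3n-2$ built from three stars, but three stars do not form a clique in $\mathcal{K}$: taking stars at $v$, $v+e_1$, $v+e_1+e_2$, an edge $\{v,v+e_i\}$ and an edge $\{v+e_1+e_2,v+e_1+e_2+e_j\}$ with $i,j\notin\{1,2\}$ and $i\ne j$ have endpoints at Hamming distance $4$, exceeding the diameter $3$ of a $6$-cycle, so no $C_6$ contains both. Hence the extra $n-1$ colors cannot be ``squeezed out'' by enlarging the clique, and a genuinely different counting argument is needed there. For general $k\equiv 0\bmod 4$ you offer two unexecuted alternatives (``a $(k/2)$-dimensional local configuration'' or bounding color classes), neither of which is shown to produce $\Omega(n^{k/4})$; the star clique only gives $\Omega(n)$, which matches the claimed order only when $k=4$. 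Finally, for $f(Q_n,C_6,6)\le n^{1+o(1)}$ you name two candidate methods without carrying out either; identifying a Behrend/Sidon-type construction as the likely tool is reasonable, but verifying that such a coloring kills every $6$-cycle with only an $n^{o(1)}$ overhead is exactly the missing work.
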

In this paper we obtain bounds on $f(Q_n, C_k, q)$ for several parameters $k,q$ such that  $q<k$. One step in this direction was taken by  
 Conder \cite{conder} in 1993, who exhibited  a 3-coloring of $Q_n$ containing no monochromatic $6$-cycles, thus proving
    \[f(Q_n, C_6, 2) \leq 3.\]
    
    Here  we obtain the following bounds.
\begin{thm}\label{thm: generalization}
    For any integers $k,q$ satisfying  $k \geq 3$ and $3 \leq q \leq k+1$,  we have \[ f(Q_n, C_{2k}, q)= o\left( n^{\frac{k-1}{2k-q+1}} \right).\]
\end{thm}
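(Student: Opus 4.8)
The plan is to prove the bound by running the conflict-free hypergraph matching method of \cite{DP, GJKKL} directly on the hypercube, rather than quoting the general estimate \eqref{eq: general upper bound}: that estimate is phrased in terms of $|V(G)|$, which for $G=Q_n$ equals $2^n$ and would yield an exponential, useless bound. The key point is that what really governs the number of colors in the nibble framework is not the total number of vertices of the host, but the number of copies of $H=C_{2k}$ through a fixed edge, and in $Q_n$ this is only polynomial in the dimension $n$. Indeed, a $2k$-cycle is a closed walk $d_1 d_2\cdots d_{2k}$ of coordinate directions in which every direction used is flipped an even number of times; the cycles using the maximal number $k$ of distinct directions (each exactly twice) dominate the count. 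Fixing an edge forces one of these $k$ directions, leaving $\binom{n-1}{k-1}=\Theta(n^{k-1})$ choices for the remaining directions together with a bounded (in $k$) number of cyclic arrangements, while cycles on fewer directions contribute only $O(n^{k-2})$. Since $Q_n$ is edge-transitive, every edge lies in \emph{exactly} $D=\Theta(n^{k-1})$ copies of $C_{2k}$, so the associated hypergraph is perfectly regular. Note $k-1=(|V(C_{2k})|-2)/2$, which is why the exponent here is half of the dense one.

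Next I would set up the auxiliary hypergraph $\mathcal H$ on vertex set $E(Q_n)\times[m]$, where $m$ is the number of colors to be determined and the edges of $\mathcal H$ encode the assignment of a single color to each edge of $Q_n$; a perfect matching of $\mathcal H$ is exactly a coloring. The requirement that every copy of $C_{2k}$ receive at least $q$ colors is imposed through a conflict system: a conflict is a set of pairs $(e_1,c_1),\dots,(e_s,c_s)$ lying on a common copy of $C_{2k}$ whose simultaneous selection would force that copy to use at most $q-1$ colors. Since a copy with at most $q-1$ colors must exhibit at least $2k-(q-1)=2k-q+1$ color coincidences, the minimal conflicts have size on the order of $2k-q+1$ — precisely the denominator in the target exponent.

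The heart of the argument is to verify the hypotheses of the conflict-free matching theorem (in the form used in \cite{BDLP}), which rest on two counting inputs. First, the codegree bound: two distinct edges lie in $O(n^{k-2})=O(D/n)$ common copies of $C_{2k}$, which supplies the $o(D)$ codegree needed for the nibble. Second, one must bound, for each pair $(e,c)$, the number of conflicts through it and the pairwise overlaps of conflicts, again using that constraining each further edge of a copy costs a factor of $n$. Feeding $D=\Theta(n^{k-1})$ and conflict size $2k-q+1$ into the theorem produces a conflict-free perfect matching, hence a valid coloring, using
\[
m = o\!\left( D^{1/(2k-q+1)} \right) = o\!\left( n^{(k-1)/(2k-q+1)} \right)
\]
colors, the little-$o$ arising from the logarithmic and regularity slack built into the method. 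This is consistent with the heuristic $m\sim(\text{copies per edge})^{1/(|E(H)|-q+1)}$, which in the dense case $G=K_n$, $H=K_p$ reproduces the Erd\H{o}s--Gy\'arf\'as exponent $(p-2)/(\binom{p}{2}-q+1)$.

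I expect the main obstacle to be bookkeeping the conflict system precisely enough to meet the boundedness conditions of the matching theorem: enumerating the ways a $C_{2k}$ in $Q_n$ can become $(q-1)$-colored, confirming each such degeneracy is captured by bounded-size conflicts of suitably small degree, and controlling the codegree when two edges share a \emph{direction} — here fixing both pins down the cycle through the forced displacement rather than through a second direction, and one must check this still removes a factor of $n$ (e.g. an adjacent pair of parallel edges forces the displacement direction, leaving only $\Theta(n^{k-2})$ completions). The constraints $3\le q\le k+1$ should enter at this stage: the inequality $q\le k+1$ is exactly $2k-q+1>q-3\ge 0$, which (mirroring the non-divisibility hypothesis behind \eqref{eq: general upper bound}, namely $2k-q+1\nmid 2k-2$) keeps the conflict sizes well-behaved and makes the logarithmic saving available, while $q=3$ sits on the divisibility boundary $2k-q+1=|V(C_{2k})|-2$ and will likely require a separate, more delicate treatment to still extract the strict little-$o$.
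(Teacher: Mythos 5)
Your plan is essentially the paper's proof: the same bipartite conflict-free matching setup (vertex set $E(Q_n)$ on one side and one copy of $E(Q_n)$ per color on the other, tiles assigning a single color to a single edge, an $A$-perfect matching being a coloring), the same key counting input (any $\ell$ prescribed edges lie on only $O(n^{k-\ell})$ copies of $C_{2k}$, which is exactly \cref{lemma: cycle_count}), and the same exponent arithmetic $m \sim (n^{k-1})^{1/(2k-q+1)}$.

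Two points in your sketch should be corrected before you try to execute it. First, the minimal conflicts do not have size $2k-q+1$: in the paper a conflict is the set of tiles of a copy of $C_{2k}$ whose color appears at least twice on that copy, subject to the total number of colors being at most $q-1$; if $j$ colors appear exactly once, such a conflict has size $2k-j$ with $0\le j\le q-2$, so the sizes range over $\{2k-q+2,\dots,2k\}$. The quantity $2k-q+1$ arises only as (conflict size minus one) minus (the number of free color choices $q-2-j$), which is what the degree condition $\Delta_{2k-j}(\cH)\le \alpha D^{2k-j-1}\log D$ actually pits against $n^{k-1}$. Second, your concern that $q=3$ sits on a divisibility boundary and needs separate treatment is misplaced, because the little-$o$ here is not obtained from a logarithmic saving in the style of \eqref{eq: general upper bound}. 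The paper instead proves (\cref{thm: actualthm}) that for \emph{every} $c>0$ one may take $D=\varepsilon n^{(k-1)/(2k-q+1)}$ with $\varepsilon\le c$ and still verify all hypotheses of \cref{thm: bipartite matching} for $n$ large (the degree and codegree bounds are strict enough to absorb the constant), giving $f(Q_n,C_{2k},q)\le c\,n^{t}+n^{\delta t}$ for a fixed $\delta<1$; letting $c\to 0$ yields $o(n^{t})$ uniformly in the allowed range of $q$, with no special handling of the endpoint.
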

The theorem is proved using the ``bipartite conflict-free matching method" of Delcourt and Postle in \cite{DP}.

In addition, we prove the following lower bounds for 6-cycles.
\begin{thm}\label{thm: 4 color C6}
   We have 
   $$ f(Q_n, C_6, 4) > (n-1)^{1/3},$$ and 
   $$f(Q_n, C_6, 5) > (n-1)^{1/2}.
    $$
   \end{thm}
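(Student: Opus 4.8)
The plan is to extract from any ``good'' coloring (one in which every $C_6$ has at least $q$ colors) a family of $6$-cycles whose edge-colors are governed by single-coordinate data, and then run a pigeonhole argument on that data. Throughout write $[m]=\{1,\dots,m\}$, let $\mathbf 0$ be the all-zero vertex, and for $d\in[n]$ let $e_d$ be the neighbor of $\mathbf 0$ in direction $d$. The key gadget is the ``book'' of $4$-cycles sharing the spine edge $\{\mathbf 0,e_n\}$: for each $d\in[n-1]$ let $R_d$ be the $4$-cycle on $\mathbf 0,e_d,e_d+e_n,e_n$. For distinct $d,d'\in[n-1]$ the symmetric difference $R_d\triangle R_{d'}$ deletes the common spine and is exactly the hexagon
$$Z_{d,d'}:\quad \mathbf 0\to e_d\to e_d+e_n\to e_n\to e_{d'}+e_n\to e_{d'}\to \mathbf 0,$$
a genuine $6$-cycle in the subcube spanned by directions $d,d',n$, with six distinct edges whenever $d\ne d'$. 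The crucial point is that each edge of $Z_{d,d'}$ lies in a single $R_\bullet$, so its color depends on only one index: writing $a_d,b_d,c_d$ for the colors of $\{\mathbf 0,e_d\}$, $\{e_d,e_d+e_n\}$, $\{e_n,e_n+e_d\}$, the six edges of $Z_{d,d'}$ receive colors $a_d,b_d,c_d,c_{d'},b_{d'},a_{d'}$.

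For the first bound I would let $C$ be the number of colors and assign to each $d\in[n-1]$ the triple $T(d)=(a_d,b_d,c_d)\in[C]^3$. If $T(d)=T(d')$ for some $d\ne d'$, then all six colors of $Z_{d,d'}$ lie in the $3$-element set $\{a_d,b_d,c_d\}$, so $Z_{d,d'}$ has at most $3$ colors, contradicting goodness when $q=4$; hence $T$ is injective and $n-1\le C^3$. To upgrade to the strict bound, observe (using $C\ge 2$, which holds for any good coloring) that $\mathrm{im}\,T$ cannot be all of $[C]^3$: if it contained both $(1,1,1)$ and $(1,1,2)$, realized at $d_0,d_1$, then $Z_{d_0,d_1}$ would be colored $1,1,1,2,1,1$, only $2$ colors. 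Thus $n-1\le|\mathrm{im}\,T|\le C^3-1$, so $C^3\ge n>n-1$, i.e.\ $C>(n-1)^{1/3}$.

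The second bound is identical in spirit but encodes each direction by a pair. I would assign to $d$ the pair $(a_d,c_d)$, the colors of the two direction-$d$ edges of $R_d$. If $(a_d,c_d)=(a_{d'},c_{d'})$ then the six colors of $Z_{d,d'}$ are $a_d,b_d,c_d,c_d,b_{d'},a_d$, which is at most $4$ distinct colors, contradicting goodness when $q=5$. So the pair-map is injective; and as before it cannot be surjective onto $[C]^2$, since the labels $(1,1)$ and $(1,2)$ would clash in the same way. This yields $n-1\le C^2-1$ and hence $C>(n-1)^{1/2}$.

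The main thing to get right is the choice of gadget: the whole argument hinges on producing $6$-cycles all of whose edges have colors indexed by a single coordinate, which is precisely what the book-of-squares-on-a-common-spine construction delivers, since symmetric-differencing two faces of the book cancels the spine and localizes the remaining colors to the families $a_\bullet,b_\bullet,c_\bullet$. I would emphasize that this never requires classifying all $6$-cycles of $Q_n$; one only applies the hypothesis to the explicit cycles $Z_{d,d'}$. I expect the only delicate points to be the routine verification that $Z_{d,d'}$ is a simple $6$-cycle with six distinct edges for $d\ne d'$, and the small integrality/non-surjectivity step that converts $n-1\le C^{e}$ into the strict inequality $C>(n-1)^{1/e}$ for $e\in\{2,3\}$.
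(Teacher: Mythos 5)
Your proof is correct and is essentially the same argument as the paper's: your book of squares on the spine $\{\mathbf 0,e_n\}$ is exactly the paper's $Q_{n-1}\times K_2$ decomposition with a fixed apex edge, and the hexagons $Z_{d,d'}$ are the same $6$-cycles the paper uses. The only difference is cosmetic packaging --- you encode each direction by an injective tuple $(a_d,b_d,c_d)$ (resp.\ $(a_d,c_d)$) where the paper runs nested pigeonhole on the same color data; both yield $C^3\gtrsim n-1$ (resp.\ $C^2\gtrsim n-1$), and your non-surjectivity step for strictness is a valid counterpart to the paper's observation that the matching colors must also avoid red and blue.
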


Note that together with 
 Theorem \ref{thm: generalization},
 this gives  $$ 
        (n-1)^{1/3} \leq f(Q_n, C_6, 4) \leq o(n^{2/3}).
    $$

Finally, we consider the case  $k=4$. Observe that for all $n \geq 2$, 
    $$
        f(Q_n, C_4, 2) = 2.
    $$
Indeed, for $i\in [n]$ let $E_i\subset E(Q_n)$ be the set of edges $xy$ such that $x$ has $i-1$ ones and $y$ has $i$ ones. Observe that any copy $C$ of $C_4$ in $Q_n$ has some $i\in [n-1]$ such that $C\cap E_i \neq \emptyset$ and $C\cap E_{i+1} \neq \emptyset$.
Therefore, to avoid monochromatic cycles of length 4, one can color all the edges in even layers $E_{2i}$ red and all  the edges in odd layers $E_{2i+1}$ blue. 

Thus together with (\ref{eq: FGLS}), the picture would be complete for 4-cycles if $f(Q_n, C_4, 3)$ is determined. Note that trivially we have $f(Q_n, C_4, 3) \ge 3$. We show the following.
\begin{thm}\label{thm: 3 color C4}
    For all $n \geq 2$, we have 
    $$
       f(Q_n, C_4, 3) \leq 4.
    $$
\end{thm}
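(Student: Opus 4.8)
\emph{Proof proposal.} The plan is to give an explicit coloring with color set $\{0,1\}^2$, assigning to every edge a pair of bits and showing that every copy of $C_4$ receives exactly three colors. Write each edge as $(x,d)$, where $d\in[n]$ is the direction of the edge and $x\in\{0,1\}^n$ is its lower endpoint, i.e.\ $x_d=0$ and the edge joins $x$ to $x+e_d$ (here $e_d$ is the sequence with a single $1$ in position $d$); let $|x|$ denote the number of ones of $x$. I will use the standard fact that every copy of $C_4$ in $Q_n$ is a \emph{coordinate square}: it is determined by a base vertex $x$ and two directions $i<j$ with $x_i=x_j=0$, and it consists of the four edges $(x,i),(x,j),(x+e_i,j),(x+e_j,i)$. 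The first two edges have lower endpoint of weight $w:=|x|$ (the \emph{bottom pair}), while the last two have lower endpoint of weight $w+1$ (the \emph{top pair}).

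For the first bit I would reuse the layer idea from the $q=2$ discussion above: set the first coordinate of the color of $(x,d)$ to be $|x|\bmod 2$. Then the two edges of the bottom pair receive first bit $w\bmod 2$ and the two edges of the top pair receive first bit $(w+1)\bmod 2$, which differ. Consequently the set of colors used on the bottom pair is disjoint from the set used on the top pair, so the total number of colors appearing on the square equals the number of distinct second bits on the bottom pair plus the number on the top pair. In particular it is at least $3$ precisely when at least one of the two pairs is assigned two different second bits. Thus the whole problem reduces to choosing a second bit $g(x,d)\in\{0,1\}$ so that, for every coordinate square, the bottom pair or the top pair is bichromatic under $g$.

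The main obstacle is finding such a $g$: one must rule out squares on which both pairs are monochromatic. A naive direction-based choice such as $g(x,d)=d\bmod 2$ fails, since whenever $i\equiv j\pmod 2$ all four second bits can coincide. I would instead take the prefix parity to the left of the flipped coordinate, $g(x,d)=\left(\sum_{l<d}x_l\right)\bmod 2$. Writing $a=\sum_{l<i}x_l$ and $b=\sum_{l<j}x_l$ modulo $2$, a short computation using $i<j$ gives second bits $a,b$ on the bottom pair, and, because flipping $e_i$ alters the prefix sum seen by the $j$-edge while flipping $e_j$ leaves the prefix sum seen by the $i$-edge unchanged, second bits $b+1$ and $a$ on the top pair. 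Hence the bottom pair is bichromatic exactly when $a\ne b$, and the top pair is bichromatic exactly when $a=b$; one of these always holds, so every coordinate square sees exactly three colors and the coloring is valid.
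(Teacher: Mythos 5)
Your proposal is correct and is essentially the paper's own proof: your coloring (weight of the lower endpoint mod $2$, together with the parity of the prefix of ones before the flipped coordinate) is exactly the coloring $\phi=(\phi_1,\phi_2)$ used in the paper, and your case analysis on the bottom/top pairs of a coordinate square matches the paper's observation that $\phi_1$ separates the two pairs while exactly one pair is bichromatic in $\phi_2$. No further changes are needed.
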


The paper is organized as follows.
In Section \ref{sect: bipartite matching thm} we describe the ``bipartite conflict-free matching method" due to Delcourt and Postle \cite{DP}, our main tool in this paper. Then in \cref{sect: generalization}, we prove \cref{thm: generalization}. \cref{thm: 4 color C6} is proven in \cref{sect: lower}. Lastly, we prove \cref{thm: 3 color C4} in \cref{sect: 3-color c4}. A couple of short remarks are given in Section 6.

\section{The bipartite conflict-free matching method}\label{sect: bipartite matching thm}

We now state the Bipartite Conflict-Free Matching Method. Given a hypergraph $\cG$ and a vertex $v \in V(\cG)$, the \emph{degree} $\deg_\cG(v)$ of $v$ is the number of edges in $\cG$ containing $v$. If $\cG$ is clear from context, we simply write $\deg(v)$. The \emph{codegree} of two vertices $u,v$ in $V(\cG)$ is the number of edges in $\cG$ containing both $u$ and $v$, which we denote by $\deg_\cG(u,v)$ or $\deg(u,v)$ if the hypergraph is clear from context. 

The maximum degree and minimum degree of $\cG$ are denoted by $\Delta(\cG)$ and $\delta(\cG)$, respectively. We say $\cG$ is \emph{$r$-bounded} if all edges of $\cG$ are of size at most $r$; if all edges of $\cG$ are of size $r$ exactly, we say $\cG$ is \emph{$r$-uniform}. 

For a hypergraph $\cG = (A,B)$, we say $\cG$ is \emph{bipartite} with parts $A$ and $B$ if $V(\cG) = A \cup B$ and every edge of $\cG$ contains exactly one vertex from $A$. A set of edges $\cM$ in $\cG$ is called a \emph{matching} if the intersection of any two edges in $\cM$ is empty. We say a matching $\cM$ of $\cG$ is \emph{$A$-perfect} if every vertex of $A$ is in an edge of the matching. 

We say a hypergraph $\cH$ is a \emph{conflict system} for $\cG$ if $V(\cH) = E(\cG)$ and $E(\cH)$ is a set of matchings of $\cG$ of size at least two. We call a matching $\cM$ of $\cG$ \emph{$\cH$-avoiding} if $\cM$ contains no edges of $\cH$.

For a hypergraph $\cH$, the \emph{$i$-degree} of a vertex $v \in V(\cH)$, which we denote $d_{\cH,i}(v)$, is the number of edges of $\cH$ of size $i$ which contain $v$. The \emph{maximum $i$-degree} of $\cH$, which we denote $\Delta_i(\cH)$, is the maximum of $d_{\cH,i}(v)$ over all $v \in V(\cH)$. The \emph{maximum $(k,\ell)$-codegree of $\cH$} is
\[\Delta_{k,\ell}(\cH) := \max_{S \in \binom{V(\cH)}{\ell}} \left| \left\{ e \in E(\cH) : S \subseteq e, |e| = k \right\} \right|.\]
That is, $\Delta_{k, \ell}(\cH)$ is the maximum number of edges in $\cH$ of size $k$ which contain a particular subset of size $\ell$ vertices. 

Next, we define the \emph{common $2$-degree} of distinct vertices $u,v \in V(\cH)$ as
\[\left| \left\{ w \in V(\cH) : uw, vw \in E(\cH)\right\} \right|.\]
Then the \emph{maximum common $2$-degree} of $\cH$ is the maximum common $2$-degree of $u,v$ taken over all pairs of vertices $u, v \in \cH$, where $u, v$ are vertex-disjoint in $\cG$. That is, recalling $u, v \in E(\cG)$, we have $u \cap v = \emptyset$.

Lastly, if $\cG$ is a hypergraph and $\cH$ is a conflict system of $\cG$, the \emph{$i$-codegree} of a vertex $v \in V(\cG)$ and $e \in E(\cG) = V(\cH)$ with $v \notin e$ is the number of edges of $\cH$ of size $i$ that contain $e$ and an edge incident with $v$. The \emph{maximum $i$-codegree} of $\cG$ with $\cH$ is then the maximum $i$-codegree over all vertices $v \in V(\cG)$ and edges $e \in E(\cG) = V(\cH)$ with $v \notin e$.

\begin{thm}[Delcourt-Postle \cite{DP}]\label{thm: bipartite matching}
    For all integers $r,g \geq 2$ and real $\beta \in (0,1)$, there exists an integer $D_\beta \geq 0$ and real $\alpha > 0$ such that the following holds for all $D \geq D_\beta$:\\
    Let $\cG = (A,B)$ be a bipartite $r$-bounded (multi)-hypergraph with codegrees at most $D^{1-\beta}$ such that every vertex in $A$ has degree at least $(1 + D^{-\alpha}) D$ and every vertex in $B$ has degree at most $D$. Let $\cH$ be a $g$-bounded conflict system of $\cG$ with $\Delta_i(\cH) \leq \alpha D^{i-1} \log{D}$ for all $2 \leq i \leq g$ and $\Delta_{k,\ell}(\cH) \leq D^{k-\ell-\beta}$ for all $2 \leq \ell < k \leq g$. If the maximum 2-codegree of $\cG$ with $\cH$ and the maximum common 2-degree of $\cH$ are both at most $D^{1-\beta}$, then there exists an $\cH$-avoiding $A$-perfect matching of $\cG$ and indeed even a set of $D_A - D^{1-\alpha}$ $(\geq D)$ disjoint $\cH$-avoiding $A$-perfect matchings of $G$.
\end{thm}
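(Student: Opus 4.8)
The plan is to prove the theorem by the \emph{semi-random (Rödl nibble) method}, building the matching over many rounds while simultaneously respecting the matching constraint in $\cG$ and the conflict constraint encoded by $\cH$. I would run an iterative process that maintains a partial $\cH$-avoiding matching $\cM_t$ together with the residual hypergraph $\cG_t$ consisting of those edges of $\cG$ that are disjoint from the vertices covered by $\cM_t$ and whose addition to $\cM_t$ does not complete any edge of $\cH$. In each round I activate every edge of $\cG_t$ independently with a small probability $p = \gamma/D$, then delete an activated edge if it meets another activated edge (preserving the matching property) or if it would complete a conflict together with edges already committed; the surviving activated edges are added to $\cM_t$. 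The key point is that, since $\gamma$ is a small constant, each activated edge survives with probability bounded below by a constant depending only on $\gamma,r,g$, so a constant fraction of uncovered $A$-vertices is covered per round and the density of available edges decays geometrically.

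\textbf{Trajectory and concentration.} I would first set up the differential-equations heuristic: writing $D_t$ for the typical available degree after $t$ rounds, one expects $D_t \approx D(1-\gamma')^t$, the number of uncovered $A$-vertices to contract by the same factor each round, and all the pseudorandom parameters (the codegrees of $\cG_t$, the $i$-degrees and $(k,\ell)$-codegrees of the residual conflict system, the $2$-codegree of $\cG_t$ with $\cH$, and the common $2$-degree of $\cH$) to stay below their hypothesized bounds rescaled by the appropriate power of $D_t$. The six numerical hypotheses are exactly calibrated for this: $\Delta_i(\cH)\le \alpha D^{i-1}\log D$ guarantees that the expected number of conflicts created per round is $o(1)$ per vertex, while $\Delta_{k,\ell}(\cH)\le D^{k-\ell-\beta}$, the codegree bound $D^{1-\beta}$, and the two $2$-codegree conditions bound the second-order interactions that govern the variances. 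For the rigorous step I would express each tracked statistic's one-round change as a function of the independent activation indicators and apply a bounded-differences / Talagrand-type inequality (or a Freedman martingale bound), showing that every tracked quantity stays within $D_t^{-\Omega(1)}$ of its expected trajectory with probability $1-\exp(-D_t^{\Omega(1)})$. A union bound over vertices and the polynomially many statistics then keeps the process on trajectory for $T=\Theta(\alpha^{-1}\log D)$ rounds, after which the residual degree has dropped to $D^{1-\alpha'}$ and only a $D^{-\Omega(1)}$ fraction of $A$ remains uncovered; here the codegree bound is precisely what prevents any single activated edge from perturbing a statistic by more than the Lipschitz hypotheses of the concentration inequalities allow.

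\textbf{Finishing over $A$ and producing many matchings.} To upgrade an almost-$A$-perfect matching to a genuinely $A$-perfect one I would exploit the asymmetric degree hypothesis, namely that every vertex of $A$ has degree at least $(1+D^{-\alpha})D$ while every vertex of $B$ has degree at most $D$. This slight excess forces the few uncovered $A$-vertices to retain enough private available edges—neither blocked by $\cM_t$ nor conflict-creating—to be absorbed by a short greedy augmentation, with the $2$-codegree bounds ensuring these completions rarely collide. Finally, to extract $D_A - D^{1-\alpha}$ pairwise-disjoint $\cH$-avoiding $A$-perfect matchings, I would peel off one such matching at a time, observing that deletion decreases every degree by roughly one and preserves all the pseudorandom bounds with $D$ replaced by the new degree; the analysis above then applies uniformly as long as the minimum $A$-degree stays above the threshold.

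\textbf{Main obstacle.} The hardest part is controlling the conflict system \emph{simultaneously} with near-regularity of the matching hypergraph across all $T$ rounds. Deleting conflict-creating edges is not independent across vertices and can bias the degree distribution of $\cG_t$, so the concentration arguments for the matching statistics and for the conflict statistics are coupled rather than separable. Designing the tracked random variables and their target trajectories so that this coupling is provably of lower order—and verifying that the six hypotheses are exactly strong enough to close the inductive invariant at every round—is the technical core, and is where essentially all the difficulty of the Delcourt–Postle method is concentrated.
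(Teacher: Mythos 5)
This statement is not proven in the paper at all: it is imported verbatim as a black box from Delcourt--Postle \cite{DP}, and the paper's only contribution regarding it is the remark that $\alpha$ depends only on $r$. So there is no in-paper proof to compare against; your sketch must be judged against the actual content of \cite{DP}. In spirit your outline does track that line of work --- a semi-random/random-greedy process that builds the matching while tracking degree, codegree, and conflict statistics and keeping them on trajectory via concentration inequalities is indeed the engine behind the conflict-free matching method.

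However, as a proof the proposal has a genuine gap: it is a roadmap in which every hard step is asserted rather than executed, and you concede as much when you call the coupling between the matching statistics and the conflict statistics ``the technical core'' without resolving it --- that coupling \emph{is} the theorem. Three concrete points where the sketch would not survive contact with the details. First, your claim that $\Delta_i(\cH)\le \alpha D^{i-1}\log D$ makes the expected number of conflicts created per round $o(1)$ per vertex is glib: the $\log D$ factor means the conflict pressure is \emph{not} negligible uniformly over the $\Theta(\log D)$ rounds, which is exactly why the constant $\alpha$ must be chosen small and why the trajectory analysis in \cite{DP} is delicate; you cannot wave at this with a generic bounded-differences bound, since single activations can shift conflict counts by amounts governed by the $(k,\ell)$-codegrees, and verifying the Lipschitz/exceptional-event hypotheses against those bounds is most of the work. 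Second, the finishing step is not a ``short greedy augmentation'': after the nibble stalls at residual degree $D^{1-\alpha'}$, an uncovered $A$-vertex may have every remaining edge blocked by $B$-collisions or near-complete conflicts, and the asymmetric hypothesis $\deg_\cG(a)\ge (1+D^{-\alpha})D$ versus $\deg_\cG(b)\le D$ is exploited in \cite{DP} as an invariant maintained at every scale of the process (the slack guarantees $A$-vertices never exhaust their available edges), not as an endgame patch; your sketch gives no estimate showing the greedy completions exist, let alone avoid creating conflicts with each other. Third, the peeling argument for $D_A-D^{1-\alpha}$ disjoint matchings does not close as stated: removing an $A$-perfect matching lowers each $A$-degree by one while the upper bound on $B$-degrees and all the $\cH$-statistics (which are stated relative to the original $D$) remain fixed, so after many removals the multiplicative slack degrades and all six hypotheses must be re-verified with updated parameters --- ``deletion decreases every degree by roughly one and preserves all the pseudorandom bounds'' is precisely the assertion that needs proof. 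In short: correct genre, plausible skeleton, but the quantitative invariant that makes the theorem true is missing.
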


Note that it follows from the proof of Theorem \ref{thm: bipartite matching} that $\alpha$ is a constant, depending only on $r$. 

\section{Proof of Theorem \ref{thm: generalization}}\label{sect: generalization}

In this section we prove our main result, Theorem~\ref{thm: generalization}, which follows from the statement below.

\begin{thm} \label{thm: actualthm}
For any $c> 0$,  integers $k,q$ satisfying  $k \geq 3$ and $3 \leq q \leq k+1$, and $n$ sufficiently large, we have
    $$
        f(Q_n, C_{2k}, q) \leq c n^{\frac{k-1}{2k-q+1}} + n^{\delta\frac{k-1}{2k-q+1}}
    $$ for some fixed absolute constant $0< \delta < 1$.
\end{thm}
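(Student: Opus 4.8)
The plan is to apply the Bipartite Conflict-Free Matching Method (Theorem~\ref{thm: bipartite matching}) to produce a coloring of $Q_n$ with few colors in which every copy of $C_{2k}$ receives at least $q$ distinct colors. The target bound $c\,n^{(k-1)/(2k-q+1)}$ suggests that the number of colors $N$ should be of order $n^{(k-1)/(2k-q+1)}$, so that $N^{2k-q+1} \approx n^{k-1}$. This exponent is exactly what one expects from a first-moment heuristic: a $2k$-cycle in $Q_n$ uses $2k$ edges, and forbidding it from having fewer than $q$ colors means forbidding it from using at most $q-1$ colors, i.e.\ controlling configurations where the $2k$ edges are colored with a palette of size $q-1$. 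Thinking of the coloring as an $A$-perfect matching in an auxiliary bipartite hypergraph $\cG$ where $A = E(Q_n)$ and $B$ encodes (edge, color) or (edge, color-class) incidences, the conflicts to be avoided are precisely the low-color $2k$-cycles.

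**First I would** set up the bipartite hypergraph $\cG = (A,B)$ so that an $A$-perfect matching corresponds to a valid edge-coloring: let $A$ be the edge set of $Q_n$, let $B$ be a color set of size $N \approx n^{(k-1)/(2k-q+1)}$ (or a slightly refined set that also records structural data needed to keep codegrees low), and let the edges of $\cG$ be the singleton-plus-color incidences, calibrated so each vertex of $A$ has degree $(1+D^{-\alpha})D$ and each vertex of $B$ has degree at most $D$ with $D$ a power of $N$ tuned to the regularity of $Q_n$. The key parameter computation is to choose $D$ (equivalently $N$) so that the degree and codegree hypotheses of Theorem~\ref{thm: bipartite matching} hold; here one must exploit that $Q_n$ is edge-regular and that the number of $2k$-cycles through a fixed edge is polynomially bounded in $n$.

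**Next I would** define the conflict system $\cH$: its vertices are $E(\cG)$ (i.e.\ (edge,color) choices), and its hyperedges are the sets of color-assignments that would make some copy of $C_{2k}$ use at most $q-1$ colors. Each such conflict hyperedge has size at most $2k$, so $\cH$ is $g$-bounded with $g = 2k$. The bulk of the work is then verifying the quantitative hypotheses of Theorem~\ref{thm: bipartite matching}: bounding the maximum $i$-degrees $\Delta_i(\cH)$ by $\alpha D^{i-1}\log D$, the codegrees $\Delta_{k,\ell}(\cH)$ by $D^{k-\ell-\beta}$, and both the maximum $2$-codegree of $\cG$ with $\cH$ and the maximum common $2$-degree of $\cH$ by $D^{1-\beta}$. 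Each of these reduces to counting partial $2k$-cycles in $Q_n$ extending a given small set of edges, and comparing those counts against the appropriate power of $D$ dictated by the choice $N \approx n^{(k-1)/(2k-q+1)}$.

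**The hard part will be** the conflict-degree estimates, specifically showing $\Delta_i(\cH) \le \alpha D^{i-1}\log D$: this is where the precise counting of copies of $C_{2k}$ in $Q_n$ through a prescribed set of $i-1$ colored edges must be carried out, and where the exact exponent $(k-1)/(2k-q+1)$ is forced. I expect the dominant term to come from counting $2k$-cycles sharing a fixed edge (or short path), which in $Q_n$ grows polynomially in $n$ with an exponent governed by the $k-1$ ``free directions'' available after the cycle is partially pinned down, matched against the $2k-q+1$ ``free color choices'' that a low-color conflict permits. Balancing these two quantities yields the stated exponent, and the extra additive term $n^{\delta(k-1)/(2k-q+1)}$ in Theorem~\ref{thm: actualthm} should absorb the vertices of $A$ left uncovered by the almost-perfect matching (which one colors greedily with a negligible palette of $O(n^{\delta(k-1)/(2k-q+1)})$ extra colors). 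A subtlety I would watch for is ensuring the conflict hyperedges genuinely have size at least two so that $\cH$ is a legitimate conflict system, and handling degenerate $2k$-cycles or repeated edges; since $Q_n$ is bipartite and has no odd cycles, the $2k$-cycles are well-behaved, but the codegree bound $D^{1-\beta}$ must still be checked against the densest local configurations, which is where the $k\ge 3$ hypothesis enters.
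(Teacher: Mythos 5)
Your overall architecture matches the paper's: same auxiliary bipartite graph $\cG$ with $A=E(Q_n)$ and $B$ encoding colors, same palette size $N\approx n^{(k-1)/(2k-q+1)}$, same cycle-counting input (the paper's Lemma~\ref{lemma: cycle_count}, giving $O(n^{k-\ell})$ copies of $C_{2k}$ through $\ell$ prescribed edges). However, there is a genuine gap in your definition of the conflict system, and it sits exactly at the spot you flagged as a subtlety without resolving it. If the conflicts are simply ``all $2k$ tiles of a copy of $C_{2k}$ together with a color assignment using at most $q-1$ colors,'' then the codegree condition fails: fix $\ell=2k-1$ tiles forming $2k-1$ edges of a $2k$-cycle colored with at most $q-2$ distinct colors. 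The remaining edge of the cycle may then receive \emph{any} of the $|N|=\Theta(D)$ colors and the resulting set of $2k$ tiles is still a conflict, so $\Delta_{2k,2k-1}(\cH)=\Omega(D)$, whereas Theorem~\ref{thm: bipartite matching} requires $\Delta_{2k,2k-1}(\cH)\le D^{1-\beta}$. Your plan as stated cannot pass this hypothesis.

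The paper's fix is the key idea missing from your proposal: for every $0\le j\le q-2$ it includes conflicts of size $2k-j$ consisting of $2k-j$ distinct edges of a common copy of $C_{2k}$, colored with at most $q-1-j$ colors, subject to the additional requirement that \emph{every color in the conflict appears at least twice}. This requirement forces the color of any one tile to lie among the $O(1)$ colors already present on the other tiles, which restores all the codegree bounds; and the family of truncated conflicts is still sufficient for correctness, because any copy of $C_{2k}$ with at most $q-1$ colors yields such a conflict after deleting the edges whose colors appear exactly once (here $j$ is the number of such colors, and $2k-j\ge k+1\ge 2$ since $q\le k+1$). Separately, your explanation of the additive term $n^{\delta(k-1)/(2k-q+1)}$ is off: Theorem~\ref{thm: bipartite matching} already delivers an $A$-perfect matching, so there is no greedy completion step; the extra term is there so that every vertex of $A$ has degree $|N|\ge(1+D^{-\alpha})D$ while $D$ itself is set to $\varepsilon n^{(k-1)/(2k-q+1)}$. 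This second point is cosmetic, but the conflict-system issue is a real obstruction that must be repaired before the argument goes through.
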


The proof utilizes the bipartite conflict-free matching method.   Our application of the theorem uses the same auxiliary hypergraph (i.e., bipartite graph) as in \cite{BDLP} and \cite{BCD}.

    First, we prove the following lemma, which will be utilized throughout the proof.

    \begin{lemma}\label{lemma: cycle_count}
        Let $x_1 y_1, \dots, x_\ell y_\ell \in E(Q_n)$ and $k\ge \ell$ be an integer. Then $Q_n$ has at most  $O(n^{k-\ell})$ copies of the cycle $C_{2k}$  containing all $x_1 y_1, \dots, x_\ell y_\ell$.
    \end{lemma}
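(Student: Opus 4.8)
The plan is to reduce the count to a product of path counts. In any copy $C\cong C_{2k}$ containing the prescribed edges $x_1y_1,\dots,x_\ell y_\ell$, these edges occupy $\ell$ of the $2k$ cyclic positions of $C$. I would first fix the \emph{shape}: the cyclic order of the prescribed edges around $C$, their orientations, and the lengths $g_1,\dots,g_\ell\ge 0$ of the $\ell$ arcs between consecutive prescribed edges, subject to $g_1+\cdots+g_\ell=2k-\ell$. There are only $O(1)$ such shapes, a bound depending on $k$ alone. Once the shape is fixed, the two endpoints of each arc are prescribed vertices --- they are endpoints of consecutive prescribed edges --- so counting completions reduces to counting, for each arc $i$, the number of length-$g_i$ paths in $Q_n$ between two fixed vertices, and multiplying.

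The engine is the following sub-lemma: the number of length-$g$ walks (hence paths) in $Q_n$ between fixed vertices $u,v$ with Hamming distance $d(u,v)=d$ is $O(n^{(g-d)/2})$. I would prove it by encoding a walk as its sequence of flipped coordinates $c_1,\dots,c_g\in[n]$. Returning the net displacement $u\oplus v$ forces each of the $d$ coordinates in the difference set to be flipped an odd number of times and every other used coordinate an even (so $\ge 2$) number of times; hence the walk uses at most $(g-d)/2$ coordinates outside the difference set, and choosing this multiset costs $O(n^{(g-d)/2})$ while ordering the $g$ flips costs only $O(1)$. Bipartiteness of $Q_n$ gives $g\equiv d\pmod 2$, so the exponent is a nonnegative integer.

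Multiplying over the $\ell$ arcs yields a total of $O\!\left(n^{\sum_i (g_i-d_i)/2}\right)$, where $d_i$ is the distance between the two endpoints of arc $i$. Using $\sum_i g_i=2k-\ell$, the exponent equals $k-\tfrac{\ell}{2}-\tfrac12\sum_i d_i$, so the whole lemma reduces to proving $\sum_{i=1}^{\ell} d_i\ge \ell$. When the prescribed edges are pairwise vertex-disjoint this is immediate: the endpoints bounding each arc lie on distinct edges, so $d_i\ge 1$ for every $i$ and the inequality holds with room to spare, giving the claimed $O(n^{k-\ell})$.

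The step I expect to be the main obstacle is exactly this inequality for \emph{degenerate} configurations. When prescribed edges share endpoints or repeat a coordinate direction (are parallel), some arcs collapse to length $0$ with $d_i=0$, so the clean arc-by-arc bound breaks and one must argue globally. The delicate point is that a repeated direction among the prescribed edges can be reused while completing the cycle, creating extra freedom that the naive bound does not see; the entire difficulty is to verify that this freedom is always compensated. I would approach it by a global displacement/charging argument --- using that the directions around $C$ sum to $\mathbf 0$ in $\mathbb{F}_2^n$ together with the layered bipartite structure of $Q_n$, regrouping arcs so that each vanishing $d_i$ is charged against distance accrued elsewhere --- but this is precisely the case where the most care is needed and where the stated exponent is tightest.
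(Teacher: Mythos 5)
Your reduction to arc-by-arc path counting is sound, and your counting engine (at most $O(n^{(g-d)/2})$ walks of length $g$ between vertices at Hamming distance $d$) is correct; this is a genuinely different and more transparent route than the paper's, which instead bounds the number of coordinate directions a $2k$-cycle can occupy. In the case of pairwise vertex-disjoint prescribed edges your argument is complete. But the degenerate case you flag as the main obstacle is not merely delicate --- it is fatal, because the lemma as stated is \emph{false} there. Take $\ell=k=3$, let $e_i$ denote the $i$-th standard basis vector, and let $u_0=00\cdots0$, $u_1=u_0\oplus e_1$, $u_2=u_0\oplus e_1\oplus e_2$, $u_3=u_0\oplus e_2$; prescribe the three edges $u_0u_1,\,u_1u_2,\,u_2u_3$, a path whose direction sequence is $1,2,1$, so its endpoints are at Hamming distance $1$ rather than $3$. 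For every coordinate $c\ge 3$ the six vertices $u_0,u_1,u_2,u_3,u_3\oplus e_c,u_0\oplus e_c$ form a copy of $C_6$ containing all three prescribed edges, giving at least $n-2$ such copies where the lemma promises $O(n^{k-\ell})=O(1)$. In your notation this is exactly one arc with $g_1=2k-\ell=3$ and $d_1=1$, so $\sum_i d_i=1<\ell$ and your exponent $k-\tfrac{\ell}{2}-\tfrac12\sum_i d_i=k-2$ is the truth; no charging argument can recover $k-\ell$.

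So the inequality $\sum_i d_i\ge\ell$ that you need simply fails whenever a maximal run of prescribed edges is not a geodesic, and the correct exponent is governed by $\sum_i d_i$ rather than by $\ell$. It is worth noting that the paper's own proof stumbles at precisely this spot: it asserts that because the prescribed edges cover $j\ge\ell+1$ vertices, those vertices ``collectively differ from $x_1$ in at least $\ell$ indices,'' but $j$ distinct vertices need only span roughly $\log_2 j$ coordinates, and in the example above the four covered vertices span only the two coordinates $\{1,2\}$. Your blind attempt therefore locates a genuine error in the paper rather than missing an idea the paper supplies. The lemma requires a corrected statement --- for instance with $\ell$ in the exponent replaced by the quantity $\tfrac{\ell}{2}+\tfrac12\min\sum_i d_i$ coming from your arc decomposition, or with a hypothesis forcing the prescribed edges into general position --- and its application in the codegree estimates of Section 3.2, where the $\ell$ fixed tiles can correspond to exactly such a non-geodesic path, has to be re-examined.
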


    \begin{proof}
        We  count the number of ways to construct a copy of the cycle $C_{2k}$ containing $x_1 y_1, \dots, x_\ell y_\ell$. 
        
        First, observe that if $x_1 y_1, \dots, x_\ell y_\ell$ are in no copies of $C_{2k}$ together, we are done. Thus, we may assume $x_1 y_1, \dots, x_\ell y_\ell$ are in some copy of $C_{2k}$ together.
        
        Observe that for any vertex $z$ on such a cycle, $z$ differs from $x_1$ in at most $k$ places. Otherwise, the shortest path between $x_1$ and $z$ has more than $k$ vertices in it, and $x_1, z$ cannot occur in a copy of $C_{2k}$ together. Moreover, if $x_1,y_1,z_3,\dots, z_{2k}$ form a copy of $C_{2k}$, then there exists an index set $I\subset [n]$ with $|I| \le k$ such that all the vertices in $y_1,z_3,\dots, z_{2k}$ differ from $x_1$ in places contained in $I$.
    
        Since the set of edges $x_1 y_1, \dots, x_\ell y_\ell$ covers some $j \geq \ell + 1$ vertices, the vertices collectively differ from $x_1$ in at least $\ell$ indices contained in $I$. Thus there are at most $\binom{n-\ell}{k- \ell } = O \left(n^{k-\ell} \right)$ ways to choose the remaining places which differ from $x_1, y_1, \dots, x_\ell, y_\ell$, and, since $k$ is constant, there are $O(1)$ ways to choose the manner in which the vertices $z_{j+1},\dots, z_{2k}$  differ from $x_1, y_1, \dots, x_\ell, y_\ell$.
    \end{proof}

\begin{proof}[Proof of \cref{thm: actualthm}]
    
    Fix $0 < \varepsilon \le \min\{1,c\}$.
    Let $r=2$, $g=2k$, and $\beta < 1$, and note that $\beta < 1 < \frac{2k-q+2}{k-1}$. Let $n$ be large enough so that $D = \varepsilon n^{\frac{k-1}{2k-q+1}} > \max\{D_\beta, 1\}$, where $D_{\beta}$ is obtained from \cref{thm: bipartite matching}, and so that (\ref{eq: degree}) and (\ref{eq: codegree}) hold below.
    Choose $\delta$  so that  $1 - \alpha < \delta < 1$, where $\alpha$ is a fixed constant obtained from Theorem \ref{thm: bipartite matching}. 

    Let $N = \left[ \varepsilon n^{\frac{k-1}{2k-q+1}} + n^{\delta \frac{k-1}{2k-q+1}} \right]$,
    where by $[a]$ we denote the set $\{1,2,\dots,\lceil a \rceil\}$. The set $N$ will be the color palette for the edge coloring of $Q_n$.
   
    We now define the auxiliary hypergraph $\cG$ (in our case it will be a graph). Let $A = E(Q_n)$. 
    For each $i \in N$, let $B_i$ be a copy of $E(Q_n)$, where we denote any edge $e \in E(Q_n)$ as $e_i$ in $B_i$. Take $B = \bigcup_{i \in N} B_i$. Let $\cG $ be the bipartite graph with vertex sides $A$ and $B$ and edges 
    $$
        e_{xy, i} = \{ xy, xy_i \}
    $$
    for all $xy \in E(Q_n)$ and $i \in N$. We refer to the edges of $\cG$ as \emph{tiles}. 
    Observe that $\cG$ is $2$-uniform, and thus $r$-bounded for $r=2$. 
    Observe that an $A$-perfect matching in $\cG$ corresponds to a well-defined edge-coloring of all the edges of $Q_n$ with at most $\varepsilon n^{\frac{k-1}{2k-q+1}} + n^{\delta \frac{k-1}{2k-q+1}}$ colors. 
    
    We now check that $\cG$ has degrees and codegrees which satisfy the requirements of \cref{thm: bipartite matching}. 

    \begin{claim}
        Our auxiliary graph $\cG$ has codegrees at most $D^{1 - \beta}$. Additionally, for any $a \in A$, we have $\deg(a) \geq (1 + D^{-\alpha})D$. Lastly, for any $b \in B$, we have $\deg(b) \leq D$.
    \end{claim}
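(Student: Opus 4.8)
The plan is to read off all three bounds directly from the rigid structure of $\cG$, since every tile has the explicit form $e_{xy,i}=\{xy,xy_i\}$ and is therefore determined by the pair $(xy,i)\in E(Q_n)\times N$. Two of the three bounds are essentially immediate from this description. For a $B$-vertex $b=xy_i\in B_i$, the only tile whose second coordinate equals $xy_i$ is $e_{xy,i}$ itself (equality $x'y'_{i'}=xy_i$ forces $x'y'=xy$ and $i'=i$), so $\deg(b)=1$, and since $D>\max\{D_\beta,1\}$ we get $\deg(b)=1\le D$. For the codegrees, $\cG$ is $2$-uniform with exactly one endpoint in $A$ and one in $B$, so two vertices can lie in a common tile only if they are the two endpoints $xy$ and $xy_i$ of a single tile; hence every pair of vertices lies in at most one common tile, and all codegrees are at most $1\le D^{1-\beta}$, using $D>1$ and $\beta<1$.

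The one substantive point is the $A$-degree lower bound. For $a=xy\in A$, the tiles containing $a$ are exactly the $|N|$ tiles $e_{xy,i}$, one per color $i\in N$, so $\deg(a)=|N|=\lceil \varepsilon n^{\frac{k-1}{2k-q+1}}+n^{\delta\frac{k-1}{2k-q+1}}\rceil\ge D+n^{\delta\frac{k-1}{2k-q+1}}$, recalling $D=\varepsilon n^{\frac{k-1}{2k-q+1}}$. Since $(1+D^{-\alpha})D=D+D^{1-\alpha}$, it suffices to establish the single inequality $n^{\delta\frac{k-1}{2k-q+1}}\ge D^{1-\alpha}$ for $n$ large, which is precisely the degree condition that $n$ is chosen large enough to guarantee at the start of the proof.

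The main (indeed the only) obstacle is this last inequality, and it is exactly where the choice of $\delta$ is used. If $\alpha\ge 1$ then $D^{1-\alpha}\le 1$ and the bound holds trivially once $n$ is large, so we may assume $\alpha<1$. Then $D^{1-\alpha}=\varepsilon^{1-\alpha}n^{(1-\alpha)\frac{k-1}{2k-q+1}}\le n^{(1-\alpha)\frac{k-1}{2k-q+1}}$, using $\varepsilon\le 1$ and $1-\alpha>0$. The hypotheses $k\ge 3$ and $3\le q\le k+1$ give $2k-q+1\ge k\ge 3>0$ and $k-1\ge 2>0$, so the exponent $\frac{k-1}{2k-q+1}$ is a fixed positive number; and since $\delta$ was chosen with $1-\alpha<\delta<1$, the exponent of $n$ on the left strictly exceeds the one on the right. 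Hence $n^{\delta\frac{k-1}{2k-q+1}}\ge n^{(1-\alpha)\frac{k-1}{2k-q+1}}\ge D^{1-\alpha}$ for all sufficiently large $n$, which yields $\deg(a)\ge(1+D^{-\alpha})D$ and completes the verification of the claim.
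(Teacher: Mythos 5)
Your proof is correct and follows essentially the same route as the paper: codegrees are at most $1$ since $\cG$ is a graph, $\deg(b)=1$, and $\deg(a)=|N|\ge D+n^{\delta\frac{k-1}{2k-q+1}}$, with the choice $\delta>1-\alpha$ supplying the needed margin over $D^{1-\alpha}$. The only cosmetic difference is that the paper bounds $n^{\delta\frac{k-1}{2k-q+1}}\ge \varepsilon^{\delta}n^{\delta\frac{k-1}{2k-q+1}}=D^{\delta}>D^{1-\alpha}$ rather than comparing exponents of $n$ directly, which is the same estimate.
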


    \begin{proof}
        First, note that $\cG$ is a graph, so the codegree of any two vertices is at most 1, which is smaller than $D^{1-\beta}$. 
        We show that $\deg(a)\ge  D+D^\delta > (1+D^{-\alpha})D $ for every $a\in A$.
        Indeed,
        $$
            \deg(a) = |N| 
            = \varepsilon n^{\frac{k-1}{2k-q+1}} + n^{\delta \frac{k-1}{2k-q+1}}\ge   \varepsilon n^{\frac{k-1}{2k-q+1}} + \varepsilon^{\delta} n^{\delta \frac{k-1}{2k-q+1}} =  D+D^\delta.
        $$
        Finally, for a vertex $b\in B$ we have 
        $
            \deg(b) = 1 \le D.
        $
    \end{proof}

Next, we define a conflict system $\cH$ as follows. Let $V(\cH) = E(\cG)$. For every $0 \leq j \leq q - 2$, we define $E(\cH)$ to contain all edges of the form 
    $$
        H= \left\{ e_{x_1y_1,i_1},e_{x_2y_2,i_2},\ldots, e_{x_{2k-j}y_{2k-j},i_{2k-j}}\right\},
    $$
such that the following hold: 
\begin{itemize}
    \item $x_1y_1, \dots, x_{2k-j}y_{2k-j}$ are distinct edges all belonging to the same copy of $C_{2k}$ in $Q_n$, 
    \item the number of distinct colors in $\{i_1,\dots,i_{2k-j}\}$ is at most $q-1-j$, and
\item every color in the multiset $\{i_1,\dots,i_{2k-j}\}$ appears at least twice.
\end{itemize}

We will refer to the edges of $\cH$ as \emph{conflicts}.

Note that since $2k-j \geq 2k-(q-2)\geq k+1$, the edges of $Q_n$ in this conflict form more than half the edges of the corresponding copy of $C_{2k}$. Finally, observe that $\cH$ is a $2k$-bounded conflict system of $\cG$. 

See Figure \ref{fig: C6_3_colors_conflicts} for examples of colored subgraphs of $Q_n$ which correspond to conflicts of $\cH$ in the case $k = 3, q = 3$.

    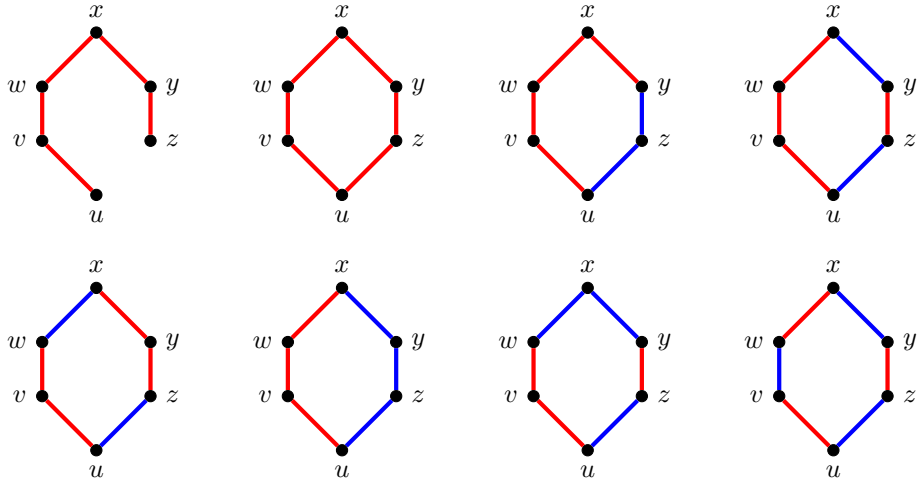
\begin{figure}[h!]
        \centering
            \begin{tikzpicture}[scale=.9]
            \node[circle, fill, draw, minimum size=1pt, inner sep = 1.5pt, label=below:{$u$}] (u) at (0,0) {};
            \node[circle, fill, draw, minimum size=1pt, inner sep = 1.5pt, label=left:{$v$}] (v) at (-.8,.8) {};
            \node[circle, fill, draw, minimum size=1pt, inner sep = 1.5pt, label=left:{$w$}] (w) at (-.8,1.6) {};
            \node[circle, fill, draw, minimum size=1pt, inner sep = 1.5pt, label=above:{$x$}] (x) at (0,2.4) {};
            \node[circle, fill, draw, minimum size=1pt, inner sep = 1.5pt, label=right:{$y$}] (y) at (.8,1.6) {};
            \node[circle, fill, draw, minimum size=1pt, inner sep = 1.5pt, label=right:{$z$}] (z) at (.8,.8) {};
            
            \draw[ultra thick, color = red] (u) -- (v) -- (w) -- (x) -- (y) -- (z);
        
    \end{tikzpicture}
        \hspace{.5cm}
            \begin{tikzpicture}[scale=.9]
            \node[circle, fill, draw, minimum size=1pt, inner sep = 1.5pt, label=below:{$u$}] (u) at (0,0) {};
            \node[circle, fill, draw, minimum size=1pt, inner sep = 1.5pt, label=left:{$v$}] (v) at (-.8,.8) {};
            \node[circle, fill, draw, minimum size=1pt, inner sep = 1.5pt, label=left:{$w$}] (w) at (-.8,1.6) {};
            \node[circle, fill, draw, minimum size=1pt, inner sep = 1.5pt, label=above:{$x$}] (x) at (0,2.4) {};
            \node[circle, fill, draw, minimum size=1pt, inner sep = 1.5pt, label=right:{$y$}] (y) at (.8,1.6) {};
            \node[circle, fill, draw, minimum size=1pt, inner sep = 1.5pt, label=right:{$z$}] (z) at (.8,.8) {};
            
            \draw[ultra thick, color = red] (u) -- (v);
            \draw[ultra thick, color = red] (w) -- (x);
            \draw[ultra thick, color = red] (y) -- (z);
            \draw[ultra thick, color = red] (v) -- (w);
            \draw[ultra thick, color = red] (x) -- (y);
            \draw[ultra thick, color = red] (u) -- (z);
        
    \end{tikzpicture}
        \hspace{.5cm}
            \begin{tikzpicture}[scale=.9]
            \node[circle, fill, draw, minimum size=1pt, inner sep = 1.5pt, label=below:{$u$}] (u) at (0,0) {};
            \node[circle, fill, draw, minimum size=1pt, inner sep = 1.5pt, label=left:{$v$}] (v) at (-.8,.8) {};
            \node[circle, fill, draw, minimum size=1pt, inner sep = 1.5pt, label=left:{$w$}] (w) at (-.8,1.6) {};
            \node[circle, fill, draw, minimum size=1pt, inner sep = 1.5pt, label=above:{$x$}] (x) at (0,2.4) {};
            \node[circle, fill, draw, minimum size=1pt, inner sep = 1.5pt, label=right:{$y$}] (y) at (.8,1.6) {};
            \node[circle, fill, draw, minimum size=1pt, inner sep = 1.5pt, label=right:{$z$}] (z) at (.8,.8) {};
            
            \draw[ultra thick, color = red] (u) -- (v) -- (w) -- (x) -- (y);
            \draw[ultra thick, color = blue] (y) -- (z) -- (u);
        
    \end{tikzpicture}
        \hspace{.5cm}
            \begin{tikzpicture}[scale=.9]
            \node[circle, fill, draw, minimum size=1pt, inner sep = 1.5pt, label=below:{$u$}] (u) at (0,0) {};
            \node[circle, fill, draw, minimum size=1pt, inner sep = 1.5pt, label=left:{$v$}] (v) at (-.8,.8) {};
            \node[circle, fill, draw, minimum size=1pt, inner sep = 1.5pt, label=left:{$w$}] (w) at (-.8,1.6) {};
            \node[circle, fill, draw, minimum size=1pt, inner sep = 1.5pt, label=above:{$x$}] (x) at (0,2.4) {};
            \node[circle, fill, draw, minimum size=1pt, inner sep = 1.5pt, label=right:{$y$}] (y) at (.8,1.6) {};
            \node[circle, fill, draw, minimum size=1pt, inner sep = 1.5pt, label=right:{$z$}] (z) at (.8,.8) {};
            
            \draw[ultra thick, color = red] (u) -- (v) -- (w) -- (x);
            \draw[ultra thick, color = red] (y) -- (z);
            \draw[ultra thick, color = blue] (x) -- (y);
            \draw[ultra thick, color = blue] (z) -- (u);
        
    \end{tikzpicture}
        
        \vspace{.1in}
            \begin{tikzpicture}[scale=.9]
            \node[circle, fill, draw, minimum size=1pt, inner sep = 1.5pt, label=below:{$u$}] (u) at (0,0) {};
            \node[circle, fill, draw, minimum size=1pt, inner sep = 1.5pt, label=left:{$v$}] (v) at (-.8,.8) {};
            \node[circle, fill, draw, minimum size=1pt, inner sep = 1.5pt, label=left:{$w$}] (w) at (-.8,1.6) {};
            \node[circle, fill, draw, minimum size=1pt, inner sep = 1.5pt, label=above:{$x$}] (x) at (0,2.4) {};
            \node[circle, fill, draw, minimum size=1pt, inner sep = 1.5pt, label=right:{$y$}] (y) at (.8,1.6) {};
            \node[circle, fill, draw, minimum size=1pt, inner sep = 1.5pt, label=right:{$z$}] (z) at (.8,.8) {};
            
            \draw[ultra thick, color = red] (u) -- (v) -- (w);
            \draw[ultra thick, color = red] (x) -- (y) -- (z);
            \draw[ultra thick, color = blue] (w) -- (x);
            \draw[ultra thick, color = blue] (z) -- (u);
        
    \end{tikzpicture}
        \hspace{.5cm}
            \begin{tikzpicture}[scale=.9]
            \node[circle, fill, draw, minimum size=1pt, inner sep = 1.5pt, label=below:{$u$}] (u) at (0,0) {};
            \node[circle, fill, draw, minimum size=1pt, inner sep = 1.5pt, label=left:{$v$}] (v) at (-.8,.8) {};
            \node[circle, fill, draw, minimum size=1pt, inner sep = 1.5pt, label=left:{$w$}] (w) at (-.8,1.6) {};
            \node[circle, fill, draw, minimum size=1pt, inner sep = 1.5pt, label=above:{$x$}] (x) at (0,2.4) {};
            \node[circle, fill, draw, minimum size=1pt, inner sep = 1.5pt, label=right:{$y$}] (y) at (.8,1.6) {};
            \node[circle, fill, draw, minimum size=1pt, inner sep = 1.5pt, label=right:{$z$}] (z) at (.8,.8) {};
            
            \draw[ultra thick, color = red] (u) -- (v) -- (w) -- (x);
            \draw[ultra thick, color = blue] (x) -- (y) -- (z) -- (u);
        
    \end{tikzpicture}
        \hspace{.5cm}
            \begin{tikzpicture}[scale=.9]
            \node[circle, fill, draw, minimum size=1pt, inner sep = 1.5pt, label=below:{$u$}] (u) at (0,0) {};
            \node[circle, fill, draw, minimum size=1pt, inner sep = 1.5pt, label=left:{$v$}] (v) at (-.8,.8) {};
            \node[circle, fill, draw, minimum size=1pt, inner sep = 1.5pt, label=left:{$w$}] (w) at (-.8,1.6) {};
            \node[circle, fill, draw, minimum size=1pt, inner sep = 1.5pt, label=above:{$x$}] (x) at (0,2.4) {};
            \node[circle, fill, draw, minimum size=1pt, inner sep = 1.5pt, label=right:{$y$}] (y) at (.8,1.6) {};
            \node[circle, fill, draw, minimum size=1pt, inner sep = 1.5pt, label=right:{$z$}] (z) at (.8,.8) {};
            
            \draw[ultra thick, color = red] (u) -- (v) -- (w);
            \draw[ultra thick, color = red] (y) -- (z);
            \draw[ultra thick, color = blue] (w) -- (x) -- (y);
            \draw[ultra thick, color = blue] (z) -- (u);
        
    \end{tikzpicture}
        \hspace{.5cm}
            \begin{tikzpicture}[scale=.9]
            \node[circle, fill, draw, minimum size=1pt, inner sep = 1.5pt, label=below:{$u$}] (u) at (0,0) {};
            \node[circle, fill, draw, minimum size=1pt, inner sep = 1.5pt, label=left:{$v$}] (v) at (-.8,.8) {};
            \node[circle, fill, draw, minimum size=1pt, inner sep = 1.5pt, label=left:{$w$}] (w) at (-.8,1.6) {};
            \node[circle, fill, draw, minimum size=1pt, inner sep = 1.5pt, label=above:{$x$}] (x) at (0,2.4) {};
            \node[circle, fill, draw, minimum size=1pt, inner sep = 1.5pt, label=right:{$y$}] (y) at (.8,1.6) {};
            \node[circle, fill, draw, minimum size=1pt, inner sep = 1.5pt, label=right:{$z$}] (z) at (.8,.8) {};
            
            \draw[ultra thick, color = red] (u) -- (v);
            \draw[ultra thick, color = red] (w) -- (x);
            \draw[ultra thick, color = red] (y) -- (z);
            \draw[ultra thick, color = blue] (v) -- (w);
            \draw[ultra thick, color = blue] (x) -- (y);
            \draw[ultra thick, color = blue] (u) -- (z);
        
    \end{tikzpicture}
        \caption{The colored subgraphs of $Q_n$ corresponding to conflicts in $\cH$ when $k = q = 3$.}
        \label{fig: C6_3_colors_conflicts}
    \end{figure}

    \subsection{Degree conditions of  $\cH$}\label{sect: degree}
        We now check that $\cH$ satisfies the  degree condition $\Delta_{2k-j}(\cH) \leq \alpha D^{2k-j-1} \log{D}$.  Let $t = \frac{k-1}{2k - q + 1}$, so the size of our color palette $N$ is $O(n^t)$. 

        \begin{claim}
            For all $0 \leq j \leq q - 2$, we have 
            $$
                \Delta_{2k-j} \left( \cH \right) \leq O \left(n^{t(2k-j-1)} \right).
            $$
        \end{claim}

        \begin{proof}
            Fix a tile $e_{xy,i} \in V(\cH)$. We  count the number of conflicts of size $2k - j$ containing $e_{xy,i}$. We do so by counting the number of ways to construct a subgraph of a $2k$-cycle in $Q_n$ with $2k-j$ distinct edges including $xy$ and $q-1-j$ distinct colors including $i$. 
        
            By \cref{lemma: cycle_count}, the edge $xy$ is in $O\left(n^{k-1} \right)$ $2k$-cycles. For each such copy of $C_{2k}$, there are 
            $${|N| \choose q- j -2}+{|N|\choose q-j-3} +\cdots+{|N|\choose 1}\le   O\left(n^ {t(q - 2 - j)} \right)$$ ways to choose the remaining at most $q-2-j$ colors. Lastly, since $k$ is a constant, there are $O(1)$ ways to select the remaining $2k-j-1$ edges from the copy of $C_{2k}$ and distribute the chosen colors among those edges. Thus, \[\Delta_{2k-j} (\cH) 
                \leq O\left(n^{k-1 + t(q - 2 - j)} \right) = O\left(n^{t( 2k - j - 1)} \right).\]
        \end{proof}
        Since $D = O\left(n^t\right)$, we have
        \begin{equation}\label{eq: degree}
                \Delta_{2k-j}(\cH) \leq  O\left(n^{t( 2k - j - 1)} \right) 
            < \alpha D^{2k-j-1} \log{D}
        \end{equation}
    for large enough $n$.
    
    \subsection{Codegree conditions}\label{sect: codegree}
        We wish to verify that $\cH$ satisfies the  codegree condition $\Delta_{2k-j, \ell} (\cH) \leq D^{2k - j - \ell - \beta}$ for all pairs $(j,\ell)$ such that $2 \leq \ell < 2k - j \leq 2k$.

        \begin{claim}
            For all pairs $(j,\ell)$ such that $2 \leq \ell < 2k - j \leq 2k$, we have
            $$
                \Delta_{2k-j, \ell} \left( \cH \right) < O \left( n^{t(2k - j - \ell -  \beta)} \right).
            $$
        \end{claim}

        \begin{proof}
            Fix $\ell$ tiles. Note that if these tiles do not correspond to $\ell$ distinct edges of $Q_n$ which all appear in at least one $2k$-cycle together, then there are 0 conflicts in $\cH$ containing these $\ell$ tiles. Otherwise, by \cref{lemma: cycle_count}, there are at most $O\left(n^{k-\ell}\right)$ $2k$-cycles in $Q_n$ containing all the corresponding $\ell$ edges. 

            We now break into two cases.
            
            First, assume $2 \leq \ell \leq 2k - j - 2$. Then for any fixed $2k$-cycle, there are at most $O\left( n^{t(q - j - 2)}\right)$ ways to choose the remaining colors in the conflict, and $O(1)$ ways to choose the edges of the cycle to include in the conflict and to assign them colors. Since  
         $$
                \frac{k - \ell}{2k - q - \ell + 2 - \beta}
                = \frac{(k - 1) - \ell + 1}{(2k - q + 1) - \ell +  (1 - \beta)}
                < t,
            $$ we have 
            $$
                \Delta_{2k-j, \ell} (\cH) \leq O\left( n^{k - \ell + t(q - j - 2)} \right) <  O \left( n^{t(2k - j - \ell - \beta)} \right).
            $$
             The second case is  $\ell = 2k - j - 1$. In this case, since all conflicts in $\cH$ of size $2k - j$ have the property that any $2k - j - 1$ tiles in the conflict have at least $q - 1 - j$ distinct colors, we need not choose any more colors for a conflict containing the $\ell$ fixed tiles. Since 
            \[ k - (2k - j - 1)  
                = j - k + 1
                < t(1 - \beta), \] for all $j$, we have
            \[\Delta_{2k-j, 2k-j-1} \left( \cH \right) 
                \leq O\left( n^{k - \ell} \right)
                < O\left(n^{t(2k - j - (2k - j - 1) - \beta)} \right)
                = O \left( n^{t(1 - \beta)} \right).\]
        \end{proof}

By this claim,  for large enough $n$, we have 
    \begin{equation}\label{eq: codegree}
                \Delta_{2k-j, \ell}(\cH) <  \varepsilon^{2k-j-\ell - \beta}n^{t( 2k - j - \ell - \beta)} 
            = D^{2k-j-\ell - \beta}.
        \end{equation}

    Lastly, we check the maximum $2$-codegree of $\cG$ with $\cH$ and the maximum common $2$-degree of $\cH$. 
    \begin{claim}
        The maximum $2$-codegree of $\cG$ with $\cH$ and the maximum common $2$-degree of $\cH$ are both at most $D^{1 - \beta}$.
    \end{claim}
    
    \begin{proof}
        Since there are no edges of size 2 in $\cH$, the maximum $2$-codegree of $\cG$ with $\cH$ and the maximum common $2$-degree of $\cH$ are both trivially 0, which is clearly less than $D^{1 - \beta}$. 
    \end{proof}

    Since all conditions hold, by \cref{thm: bipartite matching}, there exists an $\cH$-avoiding $A$-perfect matching $\cM$ of $\cG$. Note that $\cM$ corresponds to a well-defined coloring of $Q_n$ using at most $|N| = \varepsilon n^{\frac{k-1}{2k-1+1}} + n^{\delta \frac{k-1}{2k-1+1}}$ colors in which every copy of $C_{2k}$ has the property that any $(2k-j)$-edge subgraph $H$ has at least $q - 1 - j$ colors for every $0 
    \leq j \leq q-2$.

    \begin{claim}
        The coloring afforded by \cref{thm: bipartite matching} corresponds to a coloring of $Q_n$ in which no copy of $C_{2k}$ has fewer than $q$ colors.
    \end{claim}

    \begin{proof}
        For sake of contradiction, suppose we have some copy $C$ of $C_{2k}$ with at most $q-1$  colors. Let $j$ be the number of colors that appear in $C$ exactly once. Remove the edges of $C$ that are colored by a color that appears exactly once in $C$. We are left with a subgraph $F$ of $C$ containing at most $q-1-j$ colors and every color appears at least twice. Note also that since $q-1 \le k$ we must have that $0\le j\le q-2$ because at least one of the colors has to appear at least twice.

        We claim that $F$ corresponds to a conflict $H$ of $\cH$, which constitutes a contradiction. Indeed, $F$ contains some $2k-j$ distinct edges $x_1y_1,\dots,x_{2k-j}y_{2k-j}$, all appearing in the same copy $C$ of $C_{2k}$. Moreover, every color on the edges of $F$ appears at least twice, and the number of distinct colors is at most $q-1-j$. 
    \end{proof}

 This completes the proof of Theorem \ref{thm: actualthm}.  
\end{proof}
\section{Proof of Theorem \ref{thm: 4 color C6}}\label{sect: lower}

    To prove $f(Q_n,C_6,4) > (n-1)^{1/3}$, recall that $Q_n$ is isomorphic to $Q_{n-1} \times K_2$.  That is, any vertex $u$ in $Q_n$ can be uniquely written as $0v$ or $1v$ for some $v \in Q_{n-1}$. Let $Q_0$ be the subgraph of $Q_n$ induced on the set of vertices $\{0v \mid v\in Q_{n-1}\}$,  and similarly, let   $Q_1$ be the subgraph of $Q_n$ induced on the set of vertices $\{1v \mid v\in Q_{n-1}\}$. Note that $Q_0$ and $Q_1$ are isomorphic to $Q_{n-1}$, and the edges in $Q_n$ between $Q_0$ and $Q_1$ form a matching containing all edges of the form $\{0v,1v\}$.

    Now suppose we edge-color $Q_n$ with $c$ colors such that every copy of $C_6$ has at least 4 colors. Fix vertex $0v \in V(Q_0)$, and consider its $n-1$ neighbors in $Q_0$.  By the pigeonhole principle, there are at least $t := \frac{n-1}{c}$ edges to the neighbors $0v_1, \dots, 0v_t$ colored by the same color, say color red. Now, consider the set of edges $\{1v,1v_1\}, \dots, \{1v,1v_t\}$ in $Q_1$. Again,  by the pigeonhole principle, there are at least $\frac{t}{c} = \frac{n-1}{c^2}$ of them colored by the same color, say color blue. Without loss of generality, the edges  $\{1v,1v_1\}, \dots, \{1v,1v_{t/c}\}$ are colored blue (see Figure \ref{fig: C6_4_colors_lower}).

    Now observe that the edges $\{0v_1,1v_1\}, \dots,\{0v_{t/c},1v_{t/c}\}$ are all colored by distinct colors, for otherwise we get a copy of $C_6$ with at most 3 colors. This shows that 
    \[c > \frac{t}{c} = \frac{n-1}{c^2},\] implying the desired bound.

    \begin{figure}[h!]
        \centering
        \includegraphics[scale = 0.8]{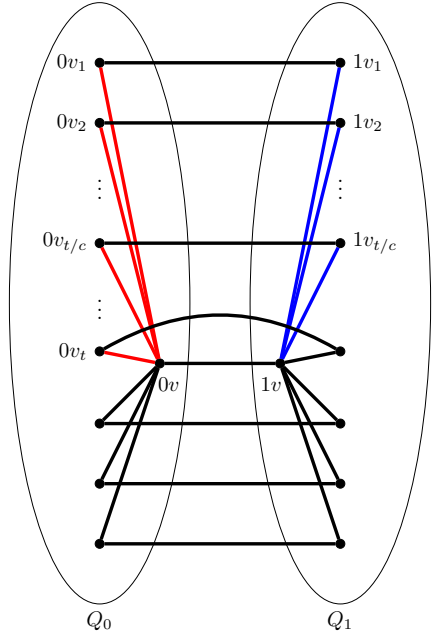}
        \caption{Color classes of $Q_n$.}
        \label{fig: C6_4_colors_lower}
    \end{figure}

\medskip

Now, to see that 
 $f(Q_n, C_6, 5) > (n-1)^{1/2}$, observe that all the edges  $\{0v_1,1v_1\},\dots, \{0v_{n/c},1v_{n/c}\}$ in the previous argument, should now have distinct colors, which are all distinct from red. This shows 
 $c> \frac{n-1}{c}$, as needed. \qed

\section{Proof of Theorem \ref{thm: 3 color C4}}\label{sect: 3-color c4}

   We provide an edge-coloring of $Q_n$ using 4 colors in which any copy of $C_4$ has at least three colors. Versions of this coloring have been discussed and used in \cite{MA} and \cite{MS}.
    
    For a sequence $v$ of 0's and 1's, let $n(v)$ be the number of 1's in $v$. If  $uv$ is an edge in $Q_n$, let $w(uv)\in \{0,1\}$ denote the number of 1's in $u$ (or $v$) before the place of difference between $u$ and $v$. 
    
    For an edge $ab$ where $n(a) < n(b)$, consider the coloring 
    $$
        \phi(ab) = (\phi_1(ab),\phi_2(ab))  = \left( n(a) \bmod{2}, w(ab) \bmod{2} \right).
    $$
   
  Consider a copy $abcd$ of $C_4$ in $Q_n$, as shown in Figure \ref{fig:C4_3_colors}. Without loss of generality, we have $a = x0y0z, b = x0y1z, c = x1y1z$, and $d = x1y0z$, where $x,y,z$ are some fixed sequences of 0's and 1's.
  
    Observe that $\phi_1(ab) +1= \phi_1(ad)+1 = \phi_1(bc) = \phi_1(cd)$.  Further, since $w(ad)=w(bc)$ and 
     $w(cd) = w(ab)+1$, we must have that either $\phi_2(ab) \neq \phi_2(ad)$ or $\phi_2(cd) \neq \phi_2(bc)$, showing that at least 3 of the pairs $\phi(ab), \phi(bc), \phi(cd), \phi(ad)$ are distinct.  
\qed

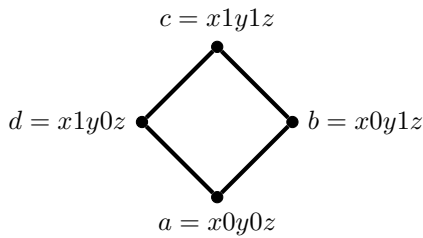
\begin{figure}[h!]
    \centering
        \begin{tikzpicture}
            \node[circle, fill, draw, minimum size=1pt, inner sep = 1.5pt, label=below:{$a = x0y0z$}] (a) at (0,0) {};
            \node[circle, fill, draw, minimum size=1pt, inner sep = 1.5pt, label=right:{$b = x0y1z$}] (b) at (1,1) {};
            \node[circle, fill, draw, minimum size=1pt, inner sep = 1.5pt, label=left:{$d = x1y0z$}] (d) at (-1,1) {};
            \node[circle, fill, draw, minimum size=1pt, inner sep = 1.5pt, label=above:{$c = x1y1z$}] (c) at (0,2) {};
            \draw[ultra thick] (a) -- (b) -- (c) -- (d) -- (a);
        
    \end{tikzpicture}
    \caption{A copy of $C_4$ in $Q_n$.}
    \label{fig:C4_3_colors}
\end{figure}

\section{Concluding remarks}
It is probably possible to generalize Theorem \ref{thm: generalization} by replacing $C_{2k}$ with any subgraph $H$ of $Q_n$ with at most $2k$ edges and diameter bounded by $k$. However, in light of previous work and the proven lower bounds, we preferred to state our results in terms of cycles. 

Furthermore, by optimizing the proof, it may be possible to shave some log power from the bound. However we were mostly interested in the order of magnitude of the exponent. 

\section{Acknowledgment}
We are grateful to Patrick Bennett for  valuable comments and suggestions. 

\bibliography{bibfile}

@article{CHHSZ,
  title={Odd {R}amsey numbers of multipartite graphs and hypergraphs},
  author={Crawford, Nicholas and Heath, Emily and Henderschedt, Owen and Schwieder, Coy and Zerbib, Shira},
  journal={arXiv:2507.19456},
  year={2025}
}

@article{JMS,
  title={Conflict-free hypergraph matchings and coverings},
  author={Joos, Felix and Mubayi, Dhruv and Smith, Zak},
  journal={arXiv:2407.18144},
  year={2024}
}

@article{LM,
    title={Generalized {R}amsey numbers via conflict-free hypergraph matchings},
    author={Andrew Lane and Natasha Morrison},
    journal={arXiv:2405.16653},
    year={2024}
}

@article{BHZ,
  title={Edge-coloring a graph {$G$} so that every copy of a graph {$H$} has an odd color class},
  author={Bennett, Patrick and Heath, Emily and Zerbib, Shira},
  journal={arXiv:2307.01314},
  year={2023}
}

@article{BCDP,
  title={The {E}rd{\H{o}}s--{G}y\'arf\'as function $f(n,4,5)=\frac{5}{6}n+o(n)$ -- so {G}y\'arf\'as was right},
  author={Bennett, Patrick and Cushman, Ryan and Dudek, Andrzej and Pra{\l}at, Pawe{\l}},
  journal={arXiv:2207.02920},
  year={2022}
}

@article{BCD,
  title={The generalized {R}amsey number $f(n, 5, 8) = \frac{6}{7} n + o(n)$},
  author={Bennett, Patrick and Cushman, Ryan and Dudek, Andrzej},
  journal={arXiv:2408.01535},
  year={2025}
}

@article{BDE,
  title={A random coloring process gives improved bounds for the {E}rd{\H{o}}s--{G}y\'arf\'as problem on generalized {R}amsey numbers},
  author={Bennett, Patrick and Dudek, Andrzej and English, Sean},
  journal={Electronic Journal of Combinatorics},
  year={2025},
  volume={32},
  issue={2},
  pages={P2.21},
  url={https://api.semanticscholar.org/CorpusID:254636263}
}

@article{GHPSZ,
  title={New bounds on the generalized {R}amsey number {$f (n, 5, 8)$}},
  author={Gomez-Leos, Enrique and Heath, Emily and Parker, Alex and Schwieder, Coy and Zerbib, Shira},
  journal={Discrete Mathematics},
  volume={347},
  number={7},
  pages={114012},
  year={2024},
  publisher={Elsevier}
}

@article{BBHZ,
    author={Deepak Bal and Patrick Bennett and Emily Heath and Shira Zerbib},
    title={Generalized {R}amsey numbers of cycles, paths, and hypergraphs},
  journal={European Journal of Combinatorics},
  year={2024},
  volume={132},
  pages={104281},
  url={https://api.semanticscholar.org/CorpusID:270062735}
}

@article{BEHK,
  title={Lower bounds on the {E}rd{\H{o}}s--{G}y{\'a}rf{\'a}s problem via color energy graphs},
  author={Balogh, J{\'o}zsef and English, Sean and Heath, Emily and Krueger, Robert A},
  journal={Journal of Graph Theory},
  volume={103},
  number={2},
  pages={378--409},
  year={2023},
  publisher={Wiley Online Library}
}

@article{AFM,
author = {Maria Axenovich and Zolt\'an F\"uredi and Dhruv Mubayi},
title = {On Generalized {R}amsey Theory: The Bipartite Case},
journal = {Journal of Combinatorial Theory, Series B},
volume = {79},
number = {1},
pages = {66-86},
year = {2000},
issn = {0095-8956},
doi = {https://doi.org/10.1006/jctb.1999.1948},
url = {https://www.sciencedirect.com/science/article/pii/S0095895699919482}
}

@article{GJKKL,
  title={Conflict-free hypergraph matchings},
  author={Glock, Stefan and Joos, Felix and Kim, Jaehoon and K{\"u}hn, Marcus and Lichev, Lyuben},
  journal={Journal of the London Mathematical Society},
  volume={109},
  number={5},
  pages={e12899},
  year={2024},
  publisher={Wiley Online Library}
}

@article{BDLP,
  title={On generalized {R}amsey numbers in the sublinear regime},
  author={Bennett, Patrick and Delcourt, Michelle and Li, Lina and Postle, Luke},
  journal={arXiv:2212.10542},
  year={2022}
}

@article{DP,
  title={Finding an almost perfect matching in a hypergraph avoiding forbidden submatchings},
  author={Delcourt, Michelle and Postle, Luke},
  journal={arXiv:2204.08981},
  year={2022}
}

@article{CH1,
  title={A $(5,5)$-colouring of ${K}_n$ with few colours},
  author={Cameron, Alex and Heath, Emily},
  journal={Combinatorics, Probability and Computing},
  publisher={Cambridge University Press},
  volume={27},
  number={6},
  pages={892--912},
  year={2018}
}

@article{CH2,
    title={New upper bounds for the {E}rd{\H{o}}s--{G}y\'arf\'as problem on generalized {R}amsey numbers}, 
    volume={32}, 
    DOI={10.1017/S0963548322000293}, 
    number={2}, 
    journal={Combinatorics, Probability and Computing}, 
    publisher={Cambridge University Press}, 
    author={Cameron, Alex and Heath, Emily},
    year={2023}, 
    pages={349–362}
}

@article{FPS,
  title={Local properties via color energy graphs and forbidden configurations},
  author={Fish, Sara and Pohoata, Cosmin and Sheffer, Adam},
  journal={SIAM Journal on Discrete Mathematics},
  volume={34},
  number={1},
  pages={177--187},
  year={2020},
  publisher={SIAM}
}

@article{PS,
  title={Local properties in colored graphs, distinct distances, and difference sets},
  author={Pohoata, Cosmin and Sheffer, Adam},
  journal={Combinatorica},
  volume={39},
  number={3},
  pages={705--714},
  year={2019},
  publisher={Springer}
}

@article{56,
  title={An Explicit Edge-Coloring of {$K_n$} with Six Colors on Every {$K_5$}},
  author={Alex Cameron},
  journal={Electronic Journal of Combinatorics},
  year={2017},
  volume={26},
  pages={4},
  url={https://api.semanticscholar.org/CorpusID:119639285}
}

@article{sarkozy2000edge,
  title={On Edge Colorings with at Least $q$ Colors in Every Subset of $p$ Vertices},
  author={S{\'a}rk{\"o}zy, G{\'a}bor N and Selkow, Stanley},
  journal={The Electronic Journal of Combinatorics},
  volume={8},
  number={1},
  pages={R9},
  year={2000}
}

@article{sarkozy2003application,
  title={An application of the regularity lemma in generalized {R}amsey theory},
  author={S{\'a}rk{\"o}zy, G{\'a}bor N and Selkow, Stanley M},
  journal={Journal of Graph Theory},
  volume={44},
  number={1},
  pages={39--49},
  year={2003},
  publisher={Wiley Online Library}
}

@article{CFLS,
  title={The {E}rd{\H{o}}s--{G}y{\'a}rf{\'a}s problem on generalized {R}amsey numbers},
  author={Conlon, David and Fox, Jacob and Lee, Choongbum and Sudakov, Benny},
  journal={Proceedings of the London Mathematical Society},
  volume={110},
  number={1},
  pages={1--18},
  year={2015},
  publisher={Oxford University Press}
}

@article{axenovich2000,
  title={A generalized {R}amsey problem},
  author={Axenovich, Maria},
  journal={Discrete Mathematics},
  volume={222},
  number={1-3},
  pages={247--249},
  year={2000},
  publisher={Elsevier}
}

@article{Mubayi1,
  title={Edge-Coloring Cliques with Three Colors on All 4-Cliques},
  author={Mubayi, Dhruv},
  journal={Combinatorica},
  volume={18},
  number={2},
  pages={293--296},
  year={1998},
  publisher={Springer}
}

@article{mubayi2004,
  title={An explicit construction for a {R}amsey problem},
  author={Mubayi, Dhruv},
  journal={Combinatorica},
  volume={24},
  number={2},
  pages={313--324},
  year={2004},
  publisher={Springer}
}

@article{er-gy,
  title={A variant of the classical {R}amsey problem},
  author={Erd{\H{o}}s, Paul and Gy{\'a}rf{\'a}s, Andr{\'a}s},
  journal={Combinatorica},
  volume={17},
  number={4},
  pages={459--467},
  year={1997},
  publisher={Springer}
}

@article {K,
    AUTHOR = {Krueger, Robert A.},
     TITLE = {Generalized {R}amsey numbers: {F}orbidding paths with few colors},
  JOURNAL = {Electronic Journal of Combinatorics},
    VOLUME = {27},
      YEAR = {2020},
    NUMBER = {1},
     PAGES = {Paper No. 1.44, 10},
      ISSN = {1077-8926},
   MRCLASS = {05C55 (05D10)},
  MRNUMBER = {4069767},
MRREVIEWER = {Bojana\ Panti\'{c}},
       DOI = {10.37236/8801},
       URL = {https://doi.org/10.37236/8801},
}

@article{MS,
  title={Coloring the cube with rainbow cycles},
  author={Mubayi, Dhruv and Stading, Randall},
  journal = {Electronic Journal of Combinatorics},
  year={2013},
  volume={20},
  issue={2},
  doi = {10.37236/2957},
  url = {https://doi.org/10.37236/2957}
}

@article{MA,
  title={A note on short cycles in a hypercube},
  author={Axenovich, Maria and Martin, Ryan},
  journal = {Discrete Math},
  year={2006},
  volume={306},
  issue={18},
  pages = {2212-2218},
  doi = {10.1016},
  url = {https://doi.org/10.1016/j.disc.2006.05.008}
}

@article{FGLS,
  title={Rainbow coloring the cube},
  author={Faudree, Ralph and Gy\'arf\'as, Andr\'as and Lesniak, Linda and Schelp, Richard},
  journal = {Journal of Graph Theory},
  year={1993},
  volume={17},
  issue={5},
  pages={607-612}
}

@article{conder,
  title={Hexagon-free subgraphs of hypercubes},
  author={Conder, Marston},
  journal = {Journal of Graph Theory},
  year={1993},
  volume={17},
  issue = {4},
  pages={477-479},
  doi = {10.1002},
  url = {https://doi.org/10.1002/jgt.3190170405}
}

@incollection {Erdos-Shelah1,
    AUTHOR = {Erd\H{o}s, Paul},
     TITLE = {Problems and results on finite and infinite graphs},
 BOOKTITLE = {Recent advances in graph theory ({P}roc. {S}econd
              {C}zechoslovak {S}ympos., {P}rague, 1974)},
     PAGES = {183--192. (loose errata)},
 PUBLISHER = {Academia, Prague},
      YEAR = {1975},
   MRCLASS = {05C99},
  MRNUMBER = {389669},
MRREVIEWER = {F.\ Harary},
}
\bibliographystyle{abbrv}

\end{document}